\newtheorem{theorem}{Theorem}
\newtheorem{definition}{Definition}
\newtheorem{proposition}{Proposition}
\def\x{{\mathbf x}}
\def\y{{\mathbf y}}
\def\u{{\mathbf u}}
\def\C{{\mathbf C}}
\def\D{{\mathbf D}}
\def\a{{\mathbf a}}
\def\g{{\mathbf g}}
\def\n{{\mathbf n}}
\def\H{{\mathbf H}}
\def\M{{\mathbf M}}
\def\I{{\mathbf I}}
\def\g{{\mathbf g}}
\def\t{{\mathbf t}}
\def\s{{\mathbf s}}
\begin{document}

\title{MMSE Estimation Under \\ Gaussian Mixture Statistics}
\title{Minimum Mean Square Error Estimation \\ Under Gaussian Mixture Statistics}

\author{John T. Fl{\aa}m, Saikat Chatterjee, Kimmo Kansanen, Torbj\"{o}rn Ekman
\thanks{John T. Fl{\aa}m, Kimmo Kansanen and Torbj\"{o}rn Ekman are with the Department of Electronics and Telecommunications, NTNU-Norwegian University of Science and Technology, Trondheim, Norway. Emails: flam@iet.ntnu.no, kimmo.kansanen@iet.ntnu.no and torbjorn.ekman@iet.ntnu.no. Saikat Chatterjee is with the Communication Theory Lab, School of Electrical Engineering, KTH-Royal Institute of Technology, Sweden. Email: saikatchatt@gmail.com, sach@kth.se.}}

\maketitle

\begin{abstract}
This paper investigates the minimum mean square error (MMSE) estimation of $\x$, given the observation $\y=\H\x+\n$, when $\x$ and $\n$ are independent and Gaussian Mixture (GM) distributed. The introduction of GM distributions, represents a generalization of the more familiar and simpler Gaussian signal and Gaussian noise instance.
We present the necessary theoretical foundation and derive the MMSE estimator for $\x$ in a closed form. Furthermore, we provide upper and lower bounds for its mean square error (MSE). These bounds are validated through Monte Carlo simulations.
      
\end{abstract}

\begin{keywords}
Bayesian linear model, Gaussian mixture, estimation.
\end{keywords}

%
\IEEEpeerreviewmaketitle

\section{Introduction}

In estimation theory, an important model is the Bayesian linear model
\begin{equation}
\mathbf{y=Hx+n},
\label{linmod}
\end{equation}
where $\y$ is a vector of observations, $\mathbf{H}$ is a known matrix, $\x$ is the vector to be estimated and $\n$ is additive noise. If $\x$ and $\n$ are mutually independent Gaussian variates, then the \textit{minimum mean square error} (MMSE) estimator for $\x$ is well known and quite tractable, see e.g. \cite{151045}. 

There are, however, often good reasons to go beyond the Gaussian setting. For one, $\x$ and $\n$ may not be Gaussian. For another, the distributions of $\x$ and $\n$ may even be multi modal. For these reasons, besides some appreciation of greater
generality, the pure Gaussian perspective is relaxed in this paper.

The extension, considered below, maintains independence between ${\mathbf{x}}$
and ${\mathbf{n}}$, but now either vector variate originates from a finite \textit{Gaussian mixture} (GM) distribution. Specifically, 
\begin{equation}
\x\sim \sum_{k\in\mathcal{K}}p_k \mathcal{N}(\u^{(k)}_{\x},\C^{(k)}_{\x\x}) \text{   and   } \n\sim \sum_{l\in\mathcal{L}}q_l \mathcal{N}(\u^{(l)}_{\n},\C^{(l)}_{\n\n}),
\label{m_mix}
\end{equation}
where the notation should be read in the distributional sense: $\x$ originates, with  a prior probability $p_k$, from a Gaussian source with distribution law $\mathcal{N}(\u^{(k)}_{\x},\C^{(k)}_{\x\x})$. Naturally, we require $\sum_{k} p_k =1$ and $p_k \geq 0$. 
The noise, $\n$, emerges in a similar but independent manner. $\mathcal{K}$ and
$\mathcal{L}$ are finite index sets. Their cardinalities determine the number of Gaussian \textit{components} in the mixtures. Clearly, when $\mathcal{K}$ and $\mathcal{L}$ are singletons,
we fall back on the familiar case of Gaussian signal and Gaussian noise. 
The component probabilities, component means and component covariances $\left(p_k, \u^{(k)}_{\x} \text{ and }\C^{(k)}_{\x\x}\right)$ are collectively referred to as the \textit{parameters} of a Gaussian mixture.

Several properties speak in favor of GM distributions. An important one is that a GM distribution can, in theory, approximate any distribution with arbitrary accuracy. Said differently, the closure of GM distributions on the vector space $\mathbb{X}$ is the set \textit{all} probability distributions on $\mathbb{X}$.
Thus, for any random vector $\mathbf{\x}\in\mathbb{X}$ there
exists a sequence of random variables $\mathbf{x}_{n}$, all of which are GM distributed, such that
\begin{eqnarray}
\begin{array}{l}
\lim_{n\rightarrow\infty}E\left\{g(\mathbf{x}_{n})\right\}=E\left\{g(\mathbf{\x})\right\} {\text{ for any bounded,}} \\ 
{\text{ continuous function }} g:\mathbb{X\rightarrow R}. 
\end{array} \nonumber
\end{eqnarray}
Therefore, by judiciously choosing the number of components, $\left|  \mathcal{K}\right|$, and the corresponding parameters, the underlying input $\mathbf{x}$ is approximated
``in distribution'' as closely as desired by a Gaussian mixture. For a formal argument see e.g. \cite{Sorenson1971465}. The intuition behind this asymptotic behavior is straightforward. First,
$\mathbf{x}$ can be approximated ad libitum by a mixture (a
convex combination) of Dirac measures. Second, each Dirac point measure is
approximated by a normal distribution having that point as its mean - and
a small covariance\footnote{Approximating an arbitrary distribution by a GM distribution, is generally a non-trivial problem.
This paper is, however, \textbf{\textit{not}} about density approximation/learning. Here we assume that $\x$ and $\n$ are associated with known GM distributions. Whether these distributions are exact or approximations is not the focus here. }. 


A second reason for using GM distributions on $\x$ and $\n$ in (\ref{linmod}), is that this produces a posterior distribution on $\x|\y$ which is also a GM. An analytic posterior distribution is very attractive: it quantifies our degree of belief in $\x$ for any $\y$, and any optimal Bayesian estimator (with respect to any criterion) may be derived from it. 

Last, but not least, it is easy to calculate the mean and covariance
of mixture distributions. These crucial parameters are transferred from
underlying components in convenient ways. So, to the extent that first- and
second-order analysis is important (the MMSE estimator corresponds to the
posterior mean), mixtures have a lot to offer.

Admittedly, to pass from from a pure Gaussian model to a corresponding GM model
is not without challenges and drawbacks. A notable one, as we shall see, is that mean square error of the MMSE estimator cannot be determined analytically.

There exists some related work on this topic. In \cite{928914}, \cite{4808405} and \cite{1597257}, it is shown that if two vectors $\x$ and $\y$ are jointly GM distributed, then the conditional distribution for $\x|\y$ is also a GM. These works do, however, not explicitly assume that $\y$ and $\x$ are related through a linear model, like (\ref{linmod}). 
In \cite{Flamicassp}, \cite{4518754}, \cite{kundusaikat}, \cite{kundusaikatsr}, linear models are assumed. In all of these works, $\x$ is
a GM, whereas $\n$ is purely Gaussian. For that simpler instance, the analytic MMSE estimator for $\x$ is provided. In \cite {1628628}, recursive estimation of a GM distributed state sequence from GM distributed measurements is considered. The resulting optimal estimator is termed a non Gaussian Kalman Filter. 

The above mentioned related works have three aspects in common - all of which invite for further investigations: (i) they all assume that the observation noise is purely Gaussian (which we believe is only a special case of GM noise), (ii) the theoretical foundation upon which the presented estimators rest is not explicitly presented, and most importantly (iii), proper analysis of the resulting \textit{mean square error} (MSE) is completely absent. 
For these reasons, a unified exposition including the derivation of the MMSE estimator for GM input and GM noise, its theoretical foundation, and analysis of its MSE, deserves to be made explicit.
To the best of our knowledge, none exists in the literature. 

In the next section, we present a theorem which compactly presents the main result of the paper: the analytical MMSE estimator with upper and lower performance bounds. In section \ref{sec:Analytical_MMSE_Estimator}, we derive the
posterior distribution rigorously, relying on the theory provided by the appendix. From the posterior, the MMSE estimator follows naturally. This proves the first part of the theorem. Section \ref{sec:Error_Analysis_of_the_MMSE_Estimator} analyzes the MSE of the MMSE estimator when the posterior is a GM, and shows that the MSE cannot be determined in a closed analytic form. Instead, we derive  upper and lower bounds for the MSE, which proves the second part of the theorem. In section \ref{sec:Simulation_Results}, these bounds are validated through Monte Carlo simulations, followed by the conclusion in section~\ref{sec:Conclusions}.

\section{The MMSE estimator with performance bounds}\label{sec:Main Result}
\begin{theorem}\label{theo:GM_estimator}
If the data are described by the Bayesian linear model (\ref{linmod}) where $\H$ is a known matrix, and $\x$ and $\n$ are independent and GM distributed as in (\ref{m_mix}), then the MMSE estimator of $\x$ is 
\begin{eqnarray}
\begin{array}{rcl}
\hat{\x} & = & \displaystyle\sum_{k,l} \alpha^{(k,l)}(\y) \,\, \left[ \u^{(k)}_{\x}+\C^{(k)}_{\x\x}\H^T \right. \\ 
& & \left. \left(\H\C^{(k)}_{\x\x}\H^T +\C^{(l)}_{\n\n}\right)^{-1}  \left(\y-\H\u^{(k)}_{\x} -\u^{(l)}_{\n}\right) \right]
\end{array}
\label{eq:GM_Density_MMSE_Estimator}
\end{eqnarray}
where
\begin{align}
\alpha^{(k,l)}(\y)=\frac{p_k q_l f^{(k,l)}(\y)}{\sum_{r,s}p_r q_s f^{(r,s)}(\y)},\nonumber
\end{align}
and $f^{(k,l)}(\y)$ is a Gaussian \textit{probability density function} (PDF) in $\y$ with mean
 \begin{align}
 \u^{(k,l)}_{\y}=\H\u^{(k)}_{\x} +\u^{(l)}_{\n},\nonumber
 \end{align}
and covariance
 \begin{align}
 \C^{(k,l)}_{\y\y}=\H\C^{(k)}_{\x\x}\H^T +\C^{(l)}_{\n\n} \nonumber.
 \end{align}

The performance of the MMSE estimator, measured by its MSE, $\epsilon^2 = E \left\{ \| \x -\hat{\x} \|_{2}^{2}\right\}$, is lower and upper bounded by 
\begin{align}
&\sum_{k,l} p_k q_l \text{Tr}\left( \C^{(k)}_{\x\x} - \C^{(k)}_{\x\x} \H^T  \left( \H\C^{(k)}_{\x\x}\H^{T}\hspace{-0.1cm}+ \C^{(l)}_{\n\n} \right)^{-1} \hspace{-0.1cm} \H \C^{(k)}_{\x\x} \right)\nonumber \\ 
&\leq  \epsilon^2 \label{eq:GM_Density_MMSE_Error_Bounds} \\
&\leq  \text{Tr} \left( \C_{\x\x} -  \C_{\x\x}  \H^{T} \left( \H\C_{\x\x}\H^{T}  +  \C_{\n\n} \right)^{-1}  \H\C_{\x\x}  \right).\nonumber
\end{align}
In (\ref{eq:GM_Density_MMSE_Error_Bounds}), $\text{Tr}(\cdot)$ denotes the trace operator, and
\begin{align}
\C_{\x\x} &=\sum_{k} p_k \left(\C^{(k)}_{\x\x}+\u^{(k)}_{\x}{\u^{(k)}_{\x}}^{T}\right)-\u_{\x}\u^{T}_{\x} \label{covx},\\
\u_{\x}&=\sum_{k} p_k \u^{(k)}_{\x}\label{mx},\\
\C_{\n\n} &=\sum_{l} q_l \left(\C^{(l)}_{\n\n}+\u^{(l)}_{\n}{\u^{(l)}_{\n}}^{T}\right)-\u_{\n}\u^{T}_{\n} \label{covn},\\
\u_{\n}&=\sum_{l} q_l \u^{(l)}_{\n}\label{mn}. 
\end{align}
\end{theorem}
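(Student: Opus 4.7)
\smallskip

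\noindent\textbf{Proof proposal.}
The plan is to attack the two halves of the theorem separately. For the estimator formula, I would introduce discrete component indicator variables $K\in\mathcal{K}$ and $L\in\mathcal{L}$ with $\Pr(K=k)=p_k$ and $\Pr(L=l)=q_l$, chosen independently and independently of all continuous randomness. Conditioning on $(K,L)=(k,l)$ reduces the problem to the standard Gaussian linear model, for which the joint distribution of $(\x,\y)$ is jointly Gaussian with known means and covariances read off from (\ref{m_mix}); in particular $\y\mid k,l\sim \mathcal{N}(\u^{(k,l)}_{\y},\C^{(k,l)}_{\y\y})$, which defines the $f^{(k,l)}(\y)$ in the theorem. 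The classical Gaussian conditioning formula (cf.\ \cite{151045}) then delivers $E[\x\mid \y,k,l]$ as exactly the bracketed expression in (\ref{eq:GM_Density_MMSE_Estimator}). Invoking the tower property,
\begin{equation}
\hat{\x}=E[\x\mid\y]=\sum_{k,l}\Pr(K=k,L=l\mid\y)\,E[\x\mid\y,k,l],\nonumber
\end{equation}
and applying Bayes' rule gives the posterior weights $\Pr(k,l\mid\y)=\alpha^{(k,l)}(\y)$, completing the first part. (The measure-theoretic justification that conditional densities may be manipulated this way is what the appendix is presumably reserved for.)

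For the lower bound I would exploit the oracle idea: revealing the component labels $(K,L)$ to the estimator can only decrease MSE. Formally, by the orthogonality / iterated-projection property of conditional expectation,
\begin{equation}
E\|\x-E[\x\mid\y]\|_2^2\;\ge\;E\|\x-E[\x\mid\y,K,L]\|_2^2.\nonumber
\end{equation}
The right-hand side is evaluated component-wise: given $(k,l)$, the conditional problem is Gaussian, so its MSE is the trace of the Schur complement $\C^{(k)}_{\x\x}-\C^{(k)}_{\x\x}\H^T(\H\C^{(k)}_{\x\x}\H^T+\C^{(l)}_{\n\n})^{-1}\H\C^{(k)}_{\x\x}$, and averaging with weights $p_kq_l$ yields exactly the lower bound in (\ref{eq:GM_Density_MMSE_Error_Bounds}).

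For the upper bound I would compare $\hat{\x}$ against the best \emph{linear} estimator of $\x$ from $\y$ that uses only the overall means and covariances $(\u_{\x},\C_{\x\x},\u_{\n},\C_{\n\n})$. Because the MMSE estimator minimizes MSE over all (measurable) estimators, it performs no worse than this linear benchmark. The MSE of the LMMSE estimator depends only on second-order statistics and is given by the Wiener-type trace expression on the last line of (\ref{eq:GM_Density_MMSE_Error_Bounds}); the overall GM means and covariances are then the standard mixture moments collected in (\ref{covx})--(\ref{mn}), derived by a direct conditioning on $K$ and $L$.

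The main obstacle will be the upper bound: one must verify that the aggregated mixture moments really are (\ref{covx})--(\ref{mn}) (a law-of-total-covariance calculation that is routine but easy to bungle with the cross terms), and one must be careful that the LMMSE MSE formula remains valid for non-Gaussian $\x$ and $\n$—its derivation uses only the first- and second-order statistics, so it does hold, but this point deserves an explicit note rather than an appeal to the Gaussian case.
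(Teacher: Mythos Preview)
Your proposal is correct, and for the estimator formula and the upper bound it matches the paper's argument almost exactly: the paper also derives $\hat{\x}$ by conditioning on the component pair (phrased there as writing the joint density $f(\y,\x)$ as a GM and invoking Bayes' rule, rather than your tower-property language, but the content is identical), and it obtains the upper bound precisely by comparing the MMSE estimator to the LMMSE estimator built from the mixture moments (\ref{covx})--(\ref{mn}).

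The one place where your route differs is the lower bound. The paper's primary proof does not use the oracle inequality $E\|\x-E[\x\mid\y]\|^2\ge E\|\x-E[\x\mid\y,K,L]\|^2$ directly; instead it expands the MSE matrix $\M=\int \C_{\x|\y}f(\y)\,d\y$ into three pieces $\M_1+\M_2+\M_3$, identifies $\text{Tr}(\M_1)$ with the genie-aided MSE, and then shows by an explicit manipulation that $\text{Tr}(\M_2)+\text{Tr}(\M_3)=\int\sum_{k,l}\alpha^{(k,l)}(\y)\|\u^{(k,l)}_{\x|\y}-\u_{\x|\y}\|^2 f(\y)\,d\y\ge 0$. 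The genie argument you give appears in the paper only afterwards, as an ``alternative argument'' offered for intuition. Your route is shorter and perfectly rigorous; the paper's decomposition, though more laborious, has the side benefit of exhibiting the exact slack between $\epsilon^2$ and the lower bound as an explicit nonnegative integral, and it also produces an intermediate upper bound $\text{Tr}(\M_1)+\text{Tr}(\M_2)$ which the paper then shows is looser than the LMMSE bound.
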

The proof of (\ref{eq:GM_Density_MMSE_Estimator}) is given in section \ref{sec:Analytical_MMSE_Estimator}, whereas the proof of (\ref{eq:GM_Density_MMSE_Error_Bounds}) is given in section \ref{sec:Error_Analysis_of_the_MMSE_Estimator}.

\section{Deriving the analytical MMSE Estimator}
\label{sec:Analytical_MMSE_Estimator}

Our assumption is that $\x$ and $\n$ are independent and GM distributed as in (\ref{m_mix}). Then, by Proposition \ref{Joint distribution of independent GM distributed random vectors} from the appendix, $\x$ and $\n$ are jointly GM distributed as 
\begin{align}
 \left [ \begin{array}{c} \x \\ \n \end{array} \right ] \sim \sum_{k, l}p_k q_l\mathcal{N}\left(\left [ \begin{array}{c} \u^{(k)}_{\x} \\ \u^{(l)}_{\n} \end{array} \right ],\left [ \begin{array}{cc} \C^{(k)}_{\x\x} & 0 \\ 0 & \C^{(l)}_{\n\n} \end{array} \right ]\right).\nonumber
 \end{align} 
Observe that equation (\ref{linmod}) can be written as
\begin{align}
\left [ \begin{array}{c} \y \\ \x \end{array} \right ]=\left [ \begin{array}{cc} \H & \I \\ \I & 0 \end{array} \right ]\left [ \begin{array}{c} \x \\ \n \end{array} \right ].\nonumber
 \end{align}
Therefore, the joint vector $[\y^{T} \,\, \x^{T}]^{T}$ is a linear transform of the GM distributed vector $[\x^{T} \,\, \n^{T}]^{T}$. By Proposition \ref{Affine transform of a GM distributed random vector.} of the appendix, the joint vector $[\y^{T} \,\, \x^{T}]^{T}$ is GM distributed as well:
\begin{eqnarray}
\begin{array}{r}
\left [ \begin{array}{c} \y \\ \x \end{array} \right ]
\sim \sum_{k,l}p_k q_l\mathcal{N}\left(\left [ \begin{array}{c} \H\u^{(k)}_{\x} +\u^{(l)}_{\n}\\ \u^{(k)}_{\x} \end{array} \right ], \right. \\
\left. \left [ \begin{array}{cc} \H\C^{(k)}_{\x\x}\H^T +\C^{(l)}_{\n\n}& \H\C^{(k)}_{\x\x} \\ \C^{(k)}_{\x\x}\H^T & \C^{(k)}_{\x\x} \end{array} \right ]\right).
\end{array}
\nonumber
\end{eqnarray}
We write the corresponding probability density function compactly as
 \begin{align}
f(\y,\x)=\sum_{k,l}p_k q_l f^{(k,l)}(\y,\x),\nonumber
 \end{align}
where $f^{(k,l)}(\y,\x)$ is a Gaussian density with mean
 \begin{align}
\left [ \begin{array}{c} \H\u^{(k)}_{\x} +\u^{(l)}_{\n}\\ \u^{(k)}_{\x} \end{array} \right ]=\left [ \begin{array}{c}\u^{(k,l)}_{\y}\\ \u^{(k)}_{\x} \end{array} \right ],\nonumber
 \end{align}
and covariance
 \begin{align}
 \left [ \begin{array}{cc} \H\C^{(k)}_{\x\x}\H^T +\C^{(l)}_{\n\n}& \H\C^{(k)}_{\x\x} \\ \C^{(k)}_{\x\x}\H^T & \C^{(k)}_{\x\x} \end{array} \right ]= \left [ \begin{array}{cc} \C^{(k,l)}_{\y\y}& \C^{(k)}_{\y\x} \\ \C^{(k)}_{\x\y} & \C^{(k)}_{\x\x} \end{array} \right ]\nonumber.
 \end{align}
Using Proposition \ref{Marginal distribution of a GM distribution} of the appendix, the marginal density for $\y$ is
\begin{align}
f(\y)=\sum_{k,l}p_k q_l f^{(k,l)}(\y)\label{pdfy},
\end{align}
where $f^{(k,l)}(\y)$ is a Gaussian density with mean $\u^{(k,l)}_{\y}$ and covariance $\C^{(k,l)}_{\y\y}$. That is
\begin{align}
f^{(k,l)}(\y) = \mathcal{N} \left( \y; \u^{(k,l)}_{\y}, \C^{(k,l)}_{\y\y} \right). 
\label{eq:Pdf_FOR_f^{(k,l)}(y)}
\end{align}
The posterior density follows from Bayes' law as
\begin{align}
f(\x|\y) &=\frac{f(\y,\x)}{f(\y)} = \frac{\sum_{k,l}p_k q_l f^{(k,l)}(\y,\x)}{\sum_{r,s}p_r q_s f^{(r,s)}(\y)}\nonumber \\
&=\frac{\sum_{k,l}p_k q_l f^{(k,l)}(\y)f^{(k,l)}(\x|\y)}{\sum_{r,s}p_r q_s f^{(r,s)}(\y)}\nonumber \\
&=\sum_{k,l}\alpha^{(k,l)}(\y)f^{(k,l)}(\x|\y)\label{post},
\end{align}
where
\begin{align}
\alpha^{(k,l)}(\y)=\frac{p_k q_l f^{(k,l)}(\y)}{\sum_{r,s}p_r q_s f^{(r,s)}(\y)}.\label{alfa}
\end{align}
The weight, $\alpha^{(k,l)}(\y)$, can be seen as the joint probability of $\x$ originating from component $k$, and $\n$ originating from component $l$, given the observation $\y$. 
Note that these weights are non-linear in the observation $\y$, and satisfy $\alpha^{(k,l)}(\y) \geq 0$ and $\sum_{k,l}\alpha^{(k,l)}(\y) = 1$.
In (\ref{post}), $f^{(k,l)}(\x|\y)$ is a conditional density of a multivariate Gaussian, $f^{(k,l)}(\y,\x)$. In that case, $f^{(k,l)}(\x|\y)$ is known to be Gaussian (see e.g. Theorem 10.2 of \cite{151045}) with mean
\begin{eqnarray}
\u^{(k,l)}_{\x|\y} & = &  \u^{(k)}_{\x} + \C^{(k)}_{\x\y} \C^{-(k,l)}_{\y\y} \left( \y - \u^{(k,l)}_{\y} \right) \label{m1} \\
& = & \u^{(k)}_{\x} + \C^{(k)}_{\x\x}\H^T \left( \H\C^{(k)}_{\x\x}\H^T  + \C^{(l)}_{\n\n} \right)^{-1} \nonumber\\
& & \hspace{1cm} \left(\y-\H\u^{(k)}_{\x} -\u^{(l)}_{\n}\right),
\label{comppostmean}
\end{eqnarray}
and covariance
 \begin{align}
\C^{(k,l)}_{\x|\y} & =\C^{(k)}_{\x\x}- \C^{(k)}_{\x\y}\C^{-(k,l)}_{\y\y}\C^{(k)}_{\y\x} \label{comppostvar1}\\
& = \C^{(k)}_{\x\x}-\C^{(k)}_{\x\x}\H^T \left(\H\C^{(k)}_{\x\x}\H^T +\C^{(l)}_{\n\n}\right)^{-1}\H \C^{(k)}_{\x\x} \label{comppostvar},
\end{align}
respectively. Here, and later, $\C^{-(k,l)}_{\y\y}$ is short for $\left({\C^{(k,l)}_{\y\y}}\right)^{-1}$. 
The posterior density $f(\x|\y)$ of~(\ref{post}) is clearly GM distributed. 
By Proposition \ref{Expectation of a mixture.} of the appendix, its mean is
\begin{align}
\u_{\x|\y} = E \left\{ \x|\y \right\} = \sum_{k,l}\alpha^{(k,l)}(\y)\u^{(k,l)}_{\x|\y},\label{mmseest}
\end{align}
and, by Proposition \ref{Covariance matrix of a mixture.} of the appendix, the covariance is
\begin{eqnarray}
\begin{array}{rl}
\C_{\x|\y} & = E \left\{ (\x - E\{ \x \}) (\x - E\{ \x \})^T   |\y \right\}  \\ & =\sum_{k,l}\alpha^{(k,l)}(\y)\left(\C^{(k,l)}_{\x|\y} + \u^{(k,l)}_{\x|\y}{\u^{(k,l)}_{\x|\y}}^{T}\right) \\
& \hspace{2cm} - \u_{\x|\y}{\u_{\x|\y}}^{T}.
\end{array}
\label{postcov}
\end{eqnarray}
For the special case when $\left|\mathcal{K}\right|=\left|\mathcal{L}\right|=1$ (Gaussian input and Gaussian noise), the posterior density, $f(\x|\y)$, is purely Gaussian. Then the mean (\ref{mmseest}) reduces to
\begin{align}
&\u_{\x|\y}= E \left\{ \x|\y \right\} = \u^{(1,1)}_{\x|\y} \label{specialmean}
\end{align}
and the covariance (\ref{postcov}) reduces to
\begin{eqnarray}
\C_{\x|\y} =\C^{(1,1)}_{\x|\y}\label{specialcov}.
\end{eqnarray}

\subsection{The MMSE estimator}
The MMSE estimator corresponds to the posterior mean, given in (\ref{mmseest}), that is 
\begin{align}
\hat{\x}_{\text{MMSE}}= \u_{\x|\y}. \nonumber
\end{align}
Inserting (\ref{comppostmean}) into (\ref{mmseest}) proves (\ref{eq:GM_Density_MMSE_Estimator}) in Theorem~\ref{theo:GM_estimator}.
In the special case when $\left|\mathcal{K}\right|=\left|\mathcal{L}\right|=1$ and $f(\x|\y)$ is Gaussian, we note from (\ref{specialmean}) and (\ref{comppostmean}) that this estimator is linear in $\y$, and from (\ref{specialcov}) and (\ref{comppostvar}) that posterior covariance matrix does not depend on $\y$. The latter property makes it easy to characterize the MSE of the estimator when $f(\x|\y)$ is Gaussian.

In the general case, when $f(\x|\y)$ is a multi-component GM, the MMSE estimator (\ref{mmseest}) is non-linear in the observed data $\y$, because of the data dependent weights $\alpha^{(k,l)}(\y)$. Furthermore, because the posterior covariance $\C_{\x|\y}$ of (\ref{postcov}) depends on the observation $\y$, the MSE becomes considerably more difficult to analyze, as we find in section \ref{sec:Error_Analysis_of_the_MMSE_Estimator}.

\subsection{The maximum a posteriori (MAP) estimator}
Although this paper is not about MAP estimation, we mention very briefly that the map estimator can be found (which is perhaps not entirely evident when the distribution is multi modal).
The MAP estimate for $\x$ is
\begin{eqnarray}
\def\argmax{\mathop{\rm arg \, max}}
\hat{\x}_{\text{MAP}}= \argmax_{ \x } f(\x|\y). \nonumber
\end{eqnarray}

Thus $\hat{\x}_{\text{MAP}}$ corresponds to the mode of $f(\x|\y)$. In the special case when $\left|\mathcal{K}\right|=\left|\mathcal{L}\right|=1$ and $f(\x|\y)$ is Gaussian, the MAP and MMSE estimates for $\x$ coincide, because the mode coincides with the mean. In the general case however, when $f(\x|\y)$ is given by (\ref{post}), the posterior is a multi modal GM distribution. The mode of such a distribution cannot be expected to coincide with its mean. A procedure for finding the MAP estimate, is to find all the modes, and identify the one with the largest probability mass. Finding the modes of a GM distribution, is a problem which has been well described and solved in \cite{888716}. Therefore we do not discuss it further here.

\section{Error Analysis of the MMSE Estimator}
\label{sec:Error_Analysis_of_the_MMSE_Estimator}

\subsection{Mean Square Error}\label{Mean square error}
For a given observation $\y$, the MSE of the estimator in (\ref{mmseest}) can be determined by the trace of $\C_{\x|\y}$ of (\ref{postcov}). 
Our main interest is not in the MSE for a particular $\y$, but rather the MSE averaged over all $\y$. Said differently, we are interested in the MSE matrix
\begin{eqnarray}
\begin{array}{l}
\M = \int \C_{\x|\y}f(\y)d\y \\
= \displaystyle\int \left( \! \sum_{k,l} \! \alpha^{(k,l)}(\y) \left( \! \C^{(k,l)}_{\x|\y} \! + \! \u^{(k,l)}_{\x|\y}{\u^{(k,l)}_{\x|\y}}^{T} \right) \! - \! \u_{\x|\y}{\u_{\x|\y}}^{T} \! \right) \\ 
\hspace{7cm} f(\y)d\y. \\ \nonumber
\end{array}
\end{eqnarray}
Using (\ref{pdfy}) and (\ref{alfa}), we obtain
\begin{align}
\begin{array}{l}
\M = \displaystyle\sum_{k,l}p_k q_l\int\left(\C^{(k,l)}_{\x|\y}+\u^{(k,l)}_{\x|\y}{\u^{(k,l)}_{\x|\y}}^{T}-\u_{\x|\y}{\u_{\x|\y}}^{T}\right) \\
\hspace{6cm} f^{(k,l)}(\y)d\y.\label{avgcov}
\end{array}
\end{align}
We inspect the above integral term-by-term. The first term of (\ref{avgcov}) is
\begin{align}
\M_1=\sum_{k,l}p_k q_l\int \C^{(k,l)}_{\x|\y}f^{(k,l)}(\y)d\y =\sum_{k,l}p_k q_l\C^{(k,l)}_{\x|\y},\label{first}
\end{align}
where the last equality holds because $\C^{(k,l)}_{\x|\y}$ is not a function of $\y$, as can be seen in (\ref{comppostvar}).
The second term of (\ref{avgcov}) is
\begin{align}
\M_2=\sum_{k,l}p_k q_l\int\u^{(k,l)}_{\x|\y}{\u^{(k,l)}_{\x|\y}}^{T}f^{(k,l)}(\y)d\y \label{sec1}.
\end{align} 
Inserting (\ref{m1}) into (\ref{sec1}), we obtain
\begin{align}
&\M_2\nonumber\\
&=\sum_{k,l}p_k q_l\int\left[\u^{(k)}_{\x} + \C^{(k)}_{\x\y} \C^{-(k,l)}_{\y\y} \left( \y - \u^{(k,l)}_{\y} \right)\right]\nonumber\\
&\hspace{1.3cm}\left[\u^{(k)}_{\x} + \C^{(k)}_{\x\y} \C^{-(k,l)}_{\y\y} \left( \y - \u^{(k,l)}_{\y} \right)\right]^T f^{(k,l)}(\y)d\y \nonumber \\
&=\sum_{k,l}p_k q_l\left(\u^{(k)}_{\x}{\u^{(k)}_{\x}}^{T}+ \C^{(k)}_{\x\y}\C^{-(k,l)}_{\y\y} \C^{(k)}_{\y\x} \right)\nonumber\\
&=\sum_{k,l}p_k q_l\left(\u^{(k)}_{\x}{\u^{(k)}_{\x}}^{T}+ \C^{(k)}_{\x\x}-\C^{(k,l)}_{\x|\y} \right),\label{sec}
\end{align}
where the last equality is obtained using (\ref{comppostvar1}).
The third term of (\ref{avgcov}) is
\begin{align}
\M_3 =-\int \u_{\x|\y}{\u_{\x|\y}}^{T} \sum_{k,l}p_k q_l f^{(k,l)}(\y)d\y. \label{third1}
\end{align}
Note from (\ref{mmseest}) that
\begin{eqnarray}
\begin{array}{l}
\u_{\x|\y}{\u_{\x|\y}}^{T} \\
= \left( \displaystyle\sum_{k,l} \alpha^{(k,l)}(\y) \u^{(k,l)}_{\x|\y} \right) \left( \displaystyle\sum_{r,s}\alpha^{(r,s)}(\y){\u^{(r,s)}_{\x|\y}}^{T} \right) \\
= \frac{ \left( \displaystyle\sum_{k,l} p_k q_l f^{(k,l)}(\y)\u^{(k,l)}_{\x|\y} \right) \left( \displaystyle\sum_{r,s} p_r q_s f^{(r,s)}(\y){\u^{(r,s)}_{\x|\y}}^{T} \right) }{\left( \displaystyle\sum_{v,w}p_v q_w f^{(v,w)}(\y)\right)^2}. \nonumber
\end{array}
\end{eqnarray}
Hence the integral in (\ref{third1}) can be written
\begin{align}
\M_3=-\sum_{k,l,r,s}p_k q_l p_r q_s\int \frac{ f^{(k,l)}(\y) f^{(r,s)}(\y)\u^{(k,l)}_{\x|\y}{\u^{(r,s)}_{\x|\y}}^{T}}{\sum_{v,w}p_v q_w f^{(v,w)}(\y)}d\y. \nonumber
\end{align}
As far as we can see, this integral cannot be solved analytically, meaning that we cannot determine the MSE matrix exactly. Our main interest is in the trace of $\M$, because this corresponds to the MSE: 
\begin{align}
\epsilon^2=\text{Tr}(\M)=\text{Tr}(\M_1)+\text{Tr}(\M_2)+\text{Tr}(\M_3).\nonumber
\end{align}
In the absence of an analytical expression of $\epsilon^2$, we pursue upper and lower bounds, as follows.
From equations (\ref{first}), (\ref{sec1}) and (\ref{third1}), we note that
\begin{align}
&\text{Tr}(\M_1)=\sum_{k,l}p_k q_l\text{Tr}\left(\C^{(k,l)}_{\x|\y}\right),\nonumber\\
&\text{Tr}(\M_2)=\sum_{k,l}p_k q_l\int{\u^{(k,l)}_{\x|\y}}^{T}\u^{(k,l)}_{\x|\y}f^{(k,l)}(\y)d\y,\label{tr2}\\
&\text{Tr}(\M_3)=-\sum_{k,l}p_k q_l\int{\u_{\x|\y}}^{T}\u_{\x|\y}f^{(k,l)}(\y)d\y,\label{tr3}
\end{align}
respectively. Since $p_k \geq 0$, $q_l \geq 0$, $f^{(k,l)}(\y)$ is a PDF, $\C^{(k,l)}_{\x|\y}$ is a covariance matrix, and ${\u^{(k,l)}_{\x|\y}}^{T}\u^{(k,l)}_{\x|\y}$ and ${\u_{\x|\y}}^{T}\u_{\x|\y}$ are inner products, it can be concluded that
\begin{align}
\text{Tr}(\M_1)\geq 0, \text{   }\text{Tr}(\M_2)\geq 0, \text{   } \text{Tr}(\M_3)\leq 0.\label{conditions}
\end{align}
Furthermore, from (\ref{tr2}) and (\ref{tr3}), we note that
\begin{align}
&\text{Tr}(\M_2)+\text{Tr}(\M_3)\nonumber \\
&=\int\sum_{k,l}p_k q_l f^{(k,l)}(\y)\left({\u^{(k,l)}_{\x|\y}}^{T}{\u^{(k,l)}_{\x|\y}}-{\u_{\x|\y}}^{T}{\u_{\x|\y}}\right)d\y\nonumber \\
&=\int\sum_{k,l}\alpha^{(k,l)}(\y)\left({\u^{(k,l)}_{\x|\y}}^{T}{\u^{(k,l)}_{\x|\y}}-{\u_{\x|\y}}^{T}{\u_{\x|\y}}\right)f(\y)d\y\nonumber\\
&=\int\sum_{k,l}\alpha^{(k,l)}(\y)\left({\u^{(k,l)}_{\x|\y}}-{\u_{\x|\y}}\right)^{T}\left({\u^{(k,l)}_{\x|\y}}-{\u_{\x|\y}}\right)f(\y)d\y\nonumber\\
&=\int\sum_{k,l}\alpha^{(k,l)}(\y)\left\|{\u^{(k,l)}_{\x|\y}}-{\u_{\x|\y}}\right\|^{2}_{2}f(\y)d\y\label{trdiff}\\
&\geq 0, \nonumber
\end{align}
where the second equality is obtained by using (\ref{pdfy}) and (\ref{alfa}); the third equality is obtained by using (\ref{mmseest}) and $\sum_{k,l}\alpha^{(k,l)}(\y) = 1$; and the inequality is obtained by using $\alpha^{(k,l)}(\y) \geq 0$. 
This, combined with the conditions (\ref{conditions}), gives the following bounds
\begin{align}
\text{Tr}(\M_1)\leq &\epsilon^2 \leq\text{Tr}(\M_1)+\text{Tr}(\M_2).
\label{bounds}
\end{align}
By appropriate substitutions using (\ref{first}) and (\ref{comppostvar}), one obtains the lower bound in (\ref{eq:GM_Density_MMSE_Error_Bounds}) of Theorem~\ref{theo:GM_estimator}. 

An alternative argument provides an intuition for the bounds in (\ref{bounds}). Imagine that a side information is available in the estimation process such that, for each observation $\y$, a genie tells us which \textit{single} Gaussian component in (\ref{m_mix}) has generated the underlying $\x$, and also which \textit{single} Gaussian component has generated the underlying $\n$. Said differently, for each $\y$, we face the familiar model of Gaussian signal and Gaussian noise. 
Such a genie-aided estimator can be described as a two-stage estimator consisting of (1) a perfect (error free) decision device, followed by (2) a decision dependent Gaussian signal and Gaussian noise MMSE estimator.  
In this (imaginary but very favorable) case, we note that 
\begin {align}
\M=\M_1, \text{ and hence } \epsilon^2 = \text{Tr}(\M_1). \nonumber
\end{align}
Without a genie, we must expect an error of at least $\text{Tr}(\M_1)$. This implies that $\text{Tr}(\M_2)+\text{Tr}(\M_3)\geq 0$. Since $\text{Tr}(\M_3)\leq 0$, we reach the same conclusions as in (\ref{bounds}). In the next section, we show that there exists a tighter upper bound than the one in (\ref{bounds}).

\subsection{Tightening the Upper Bound}
\label{sec:Lowering_the_Upper_Bound}

The upper bound of (\ref{bounds}), $\text{Tr}(\M_1)+\text{Tr}(\M_2)$, can in fact be replaced by a tighter one. This can be seen by invoking the following argument. Instead of using the optimal MMSE estimator in (\ref{mmseest}), we \textit{could} use a linear MMSE (LMMSE) estimator. The LMMSE estimator is given by (see e.g. Theorem 12.1 of \cite{151045}) 
\begin{align}
\hat{\x}=\u_{\x} + \C_{\x\x}\H^T \left( \H\C_{\x\x}\H^T  + \C_{\n\n}\right)^{-1} \left(\y-\H\u_{\x}-\u_{\n}\right),\label{lmmseest}
\end{align}
with corresponding MSE matrix
\begin{align}
\C_{\x\x} - \C_{\x\x}\H^{T}\left(\H\C_{\x\x}\H^{T}+\C_{\n\n}\right)^{-1}\H\C_{\x\x}.\label{lmmsecov}
\end{align} 
Here, $\u_{\x}$ and $\C_{\x\x}$ are the mean and covariance of $\x$, given by (\ref{mx}) and (\ref{covx}) respectively, and $\u_{\n}$ and $\C_{\n\n}$ are the mean and covariance of $\n$, given by (\ref{mn}) and (\ref{covn}) respectively. 

The MSE of the LMMSE estimator is given by the trace of (\ref{lmmsecov}): 
\begin{align}
\epsilon^{2}_{L} &= \text{Tr} \left(\C_{\x\x} - \C_{\x\x}\H^{T}\left(\H\C_{\x\x}\H^{T}+\C_{\n\n}\right)^{-1}\H\C_{\x\x}\right)\nonumber\\
&=\text{Tr}\left(\C_{\x\x}\right)-\sum_{j}\g^{T}_{j}\left(\H\C_{\x\x}\H^{T}+\C_{\n\n}\right)^{-1}\g_{j}\label{covlmmse},
\end{align}
where $\g_{j}$ is the $j$-th column of $\H\C_{\x\x}$. In (\ref{covlmmse}), $\left(\H\C_{\x\x}\H^{T}+\C_{\n\n}\right)^{-1}$ is a positive semidefinite matrix, which implies that
\begin{align}
\epsilon^{2}_{L} \leq\text{Tr}\left(\C_{\x\x}\right).\nonumber
\end{align}

Now, we compare this with $\text{Tr}(\M_1)+\text{Tr}(\M_2)$. Using (\ref{first}) and (\ref{sec}), we may write
\begin{eqnarray}
\begin{array}{l}
\text{Tr}(\M_1)+\text{Tr}(\M_2) \\
=\text{Tr}\left(\sum_{k}p_k \left(\C^{(k)}_{\x\x}+\u^{(k)}_{\x}{\u^{(k)}_{\x}}^{T} \right)\right) \\
\geq \text{Tr}\left(\sum_{k}p_k \left(\C^{(k)}_{\x\x}+\u^{(k)}_{\x}{\u^{(k)}_{\x}}^{T} \right)-\u_x \u^{T}_x\right) \\
=\text{Tr}\left(\C_{\x\x}\right) \\
\geq \epsilon^{2}_{L}\nonumber ,
\end{array}
\end{eqnarray}
where the last equality follows from using Proposition \ref{Covariance matrix of a mixture.} of the appendix.
Since we know that the LMMSE estimator cannot outperform the optimal MMSE estimator, on average, we can replace $\text{Tr}(\M_1)+\text{Tr}(\M_2)$, by the tighter bound $\epsilon^{2}_{L}$. Note that $\epsilon^{2}_{L}$ in (\ref{covlmmse}) corresponds to the upper bound in (\ref{eq:GM_Density_MMSE_Error_Bounds}) of Theorem~\ref{theo:GM_estimator}. 

In summary, the performance of the optimal MMSE estimator in (\ref{mmseest}) is lower bounded by a genie-aided MMSE estimator and upper bounded by the LMMSE estimator.

\subsection{Simple Examples: High and Low SNR Cases}\label{Simple Example: High and Low SNR Cases}
Intuitively, one expects that the MSE approaches its lower bound as the signal-to-noise-ratio,
\begin{align}
\text{SNR}=\frac{E\left\{\left\|\x\right\|^{2}_{2}\right\}}{E\left\{\left\|\n\right\|^{2}_{2}\right\}},\nonumber
\end{align}
 goes to infinity and the upper bound as the SNR goes to zero. We will demonstrate that this is true for a simple, but instructive, example. 
Throughout this example, we assume the noise to be distributed as
\begin{align}
\n\sim a\sum_{l\in\mathcal{L}}q_l \mathcal{N}(\u^{(l)}_{\n},\C^{(l)}_{\n\n})=\sum_{l\in\mathcal{L}}q_l \mathcal{N}(a\u^{(l)}_{\n},a^2\C^{(l)}_{\n\n}),\label{noisedist}
\end{align}
where $a$ is a scalar which can be set to account for any SNR level. Furthermore, we assume that $\H$ is a full rank square matrix. Then (\ref{comppostmean}) can be written
\begin{align}
\u^{(k,l)}_{\x|\y}=\u^{(k)}_{\x} + \C^{(k)}_{\x\x}\H^T \left( \H\C^{(k)}_{\x\x}\H^T  + a^2 \C^{(l)}_{\n\n} \right)^{-1}\nonumber\\
\hspace{4cm} \left(\y-\H\u^{(k)}_{\x}-a\u^{(l)}_{\n}\right).\nonumber
\end{align}

\subsubsection{High SNR}\label{Infinite SNR}
We drive the SNR towards infinity by $\lim a\rightarrow 0$. Then the above reads
\begin{align}
\u^{(k,l)}_{\x|\y}&=\u^{(k)}_{\x} + \C^{(k)}_{\x\x}\H^T \left( \H\C^{(k)}_{\x\x}\H^T  \right)^{-1} \left(\y-\H\u^{(k)}_{\x}\right)\nonumber\\
&=\u^{(k)}_{\x} + \H^{-1}\left(\y-\H\u^{(k)}_{\x}\right)\nonumber \\
&=\H^{-1}\y.\label{resex}
\end{align}
Thus, the component means of the the posterior are all the same. In that case we have $\u_{\x|\y}=\u^{(k,l)}_{\x|\y}$, and from (\ref{trdiff}) it can be verified that $\text{Tr}(\M_2)+\text{Tr}(\M_3)=0$. Hence, the MSE will be determined by $\text{Tr}(\M_1$) only, and by (\ref{bounds}) it therefore reaches the lower bound.
This bound can be found, using (\ref{comppostvar}), which in our case reduces to: 
 \begin{align}
\C^{(k,l)}_{\x|\y} & = \C^{(k)}_{\x\x}-\C^{(k)}_{\x\x}\H^T \left(\H\C^{(k)}_{\x\x}\H^T \right)^{-1}\H \C^{(k)}_{\x\x} \nonumber\\
&=\C^{(k)}_{\x\x}-\H^{-1}\H \C^{(k)}_{\x\x}=0.\nonumber
\end{align}
Inserting this into (\ref{first}), and taking the trace, we find that the lower bound of the MSE is zero.
Note from (\ref{resex}), that the estimator discards all prior knowledge and completely trusts the data. This is expected at infinitely high SNR.

Finally, we remark that the MSE of the LMMSE estimator also will also be zero when the SNR goes to infinity: With $\n$ distributed as in (\ref{noisedist}), the LMMSE estimator in (\ref{lmmseest}) becomes
\begin{align}
\hat{\x}=\u_{\x} + \C_{\x\x}\H^T \left( \H\C_{\x\x}\H^T  + a^2 \C_{\n\n}\right)^{-1}\nonumber\\
\hspace{4cm} \left(\y-\H\u_{\x}-a\u_{\n}\right).\label{lmmseest2}
\end{align}
Taking $\lim a\rightarrow 0$, this simplifies to
\begin{align}
 \hat{\x}=\H^{-1}\y.\nonumber 
\end{align}
But this is the same as (\ref{resex}). Hence, at very high SNR the LMMSE estimator and the optimal MMSE estimator coincide, and therefore have the same performance.

\subsubsection{Low SNR}\label{Zero SNR}
Here, it is convenient to rewrite (\ref{comppostmean}) in an alternative, but equivalent form
\begin{align}
\u^{(k,l)}_{\x|\y}&=&\u^{(k)}_{\x}+\left(\C^{-(k)}_{\x\x}+\H^T \C^{-(l)}_{\n\n}\H\right)^{-1}\H^{T} \C^{-(l)}_{\n\n}\nonumber\\
& & \hspace{1cm} \left(\y-\H\u^{(k)}_{\x} -\u^{(l)}_{\n}\right).\nonumber
\end{align}
With $\n$ distributed as in (\ref{noisedist}), this becomes
\begin{align}
\u^{(k,l)}_{\x|\y}&=&\u^{(k)}_{\x}+\left(\C^{-(k)}_{\x\x}+\frac{1}{a^2}\H^T \C^{-(l)}_{\n\n}\H\right)^{-1}\frac{1}{a^2}\H^{T} \C^{-(l)}_{\n\n}\nonumber\\
& & \hspace{1cm} \left(\y-\H\u^{(k)}_{\x} -a\u^{(l)}_{\n}\right).\nonumber
\end{align}
When driving the SNR very low, by $\lim a\rightarrow \infty$, this reduces to
\begin{align}
\u^{(k,l)}_{\x|\y}=\u^{(k)}_{\x}.\nonumber
\end{align}
Thus, the MMSE estimate for $\x$ is
\begin{align}
\u_{\x|\y}&=\sum_{k,l}\alpha^{(k,l)}(\y)\u^{(k,l)}_{\x|\y}\nonumber\\
&=\frac{\sum_{k,l} p_k q_l f^{(k,l)}(\y)}{\sum_{r,s}p_r q_s f^{(r,s)}(\y)}\u^{(k)}_{\x}\nonumber\\
&=\sum_k p_k \u^{(k)}_{\x}.\label{infsnrmse}
\end{align}
In (\ref{infsnrmse}), the last equality holds because $f^{(k,l)}(\y)$ has covariance 
\begin{align}
\C^{(k,l)}_{\y\y}=\H\C^{(k)}_{\x\x}\H^T +a^2 \C^{(l)}_{\n\n}\nonumber
\end{align}
and when $a\rightarrow\infty$, $f^{(k,l)}(\y)$ approaches a uniform distribution with infinite support. Hence, it approaches a constant which is independent of $\y$, $k$ and $l$, and we may simply disregard it. Note from (\ref{infsnrmse}), that the estimator discards the data and uses only prior information, which is expected at zero SNR.
Now, we turn to the LMMSE estimator (\ref{lmmseest2}), which may be rewritten equivalently as
\begin{align}
\hat{\x}=\u_{\x}+\left(\C^{-1}_{\x\x}+\frac{1}{a^2}\H^T \C^{-1}_{\n\n}\H\right)^{-1}\frac{1}{a^2}\H^{T} \C^{-1}_{\n\n}\nonumber\\
\hspace{4cm}\left(\y-\H\u_{\x}-a\u_{\n}\right).\nonumber
\end{align}
With $\lim a\rightarrow \infty$, this reduces to
\begin{align}
\hat{\x}=\u_{\x}=\sum_k p_k \u^{(k)}_{\x}.\label{ss}
\end{align}
But (\ref{ss}) is equal to (\ref{infsnrmse}). Thus, also at very low SNR, the MMSE estimator and the LMMSE estimator coincide. In that case, the error of the MMSE estimator coincides with $\epsilon^{2}_{L}$ in (\ref{covlmmse}), which corresponds to the upper bound.

In summary, in the asymptotic cases of infinite and zero SNR, the MMSE estimator attains minimum and maximum error respectively. In these extreme cases, one might just as well use the simpler LMMSE estimator, because it performs identically.

\section{Simulation Results}
\label{sec:Simulation_Results}
We have shown that at infinite and zero SNR, the LMMSE estimator is just as good as the MMSE estimator. Now we demonstrate that at more realistic and intermediate SNRs, the MMSE estimator certainly outperforms the LMMSE estimator. We do this using Monte Carlo simulations. 
An \textit{estimate} of $\epsilon^2$ can be obtained by calculating the sample mean of $\left\|\x-\u_{\x|\y}\right\|^{2}_{2}$ from many independent observations. 
The plot in Figure \ref{fig:1} shows the lower bound, $\text{Tr}(\M_1)$, the upper bound  $\epsilon^{2}_{L}$ and an estimate of $\epsilon^2$, all in dB, versus an increasing SNR. The SNR ranges from -10 dB to 50 dB in steps of 1 dB. The following parameters have been used:
\begin{itemize}
	\item $\H=\I$, with $\I$ being $5\times 5$.
	\item $\x$ is GM distributed with $|\mathcal{K}|=4$. The component means are the columns of the following matrix
	 \begin{align}
 \left [ \begin{array}{cccc} 35.381 & -47.087 & 79.522 & -30.903 \\
-20.184 & 0.286 & -51.577 & -5.826\\ 
-6.377 & -68.308 & -17.330 & 3.246 \\
24.419 & 4.400 & -7.422 & -101.586\\ 
38.891 & 1.195 & 9.282126 & -0.047508 \end{array} \right ]. \nonumber
 \end{align}
These columns have simply been drawn independently from $\mathcal{N}(0,\sqrt{1000}\I)$. We use component covariance matrices $\C^{(k)}_{\x\x}=\I$, and uniform component probabilities $p_k=1/|\mathcal{K}|=1/4$.
	\item Gaussian noise: $\n\sim\mathcal{N}(0,\beta\I)$. Proper adjustment of $\beta$ provides the required SNRs.
\end{itemize}

\begin{figure}
\includegraphics[scale=0.32]{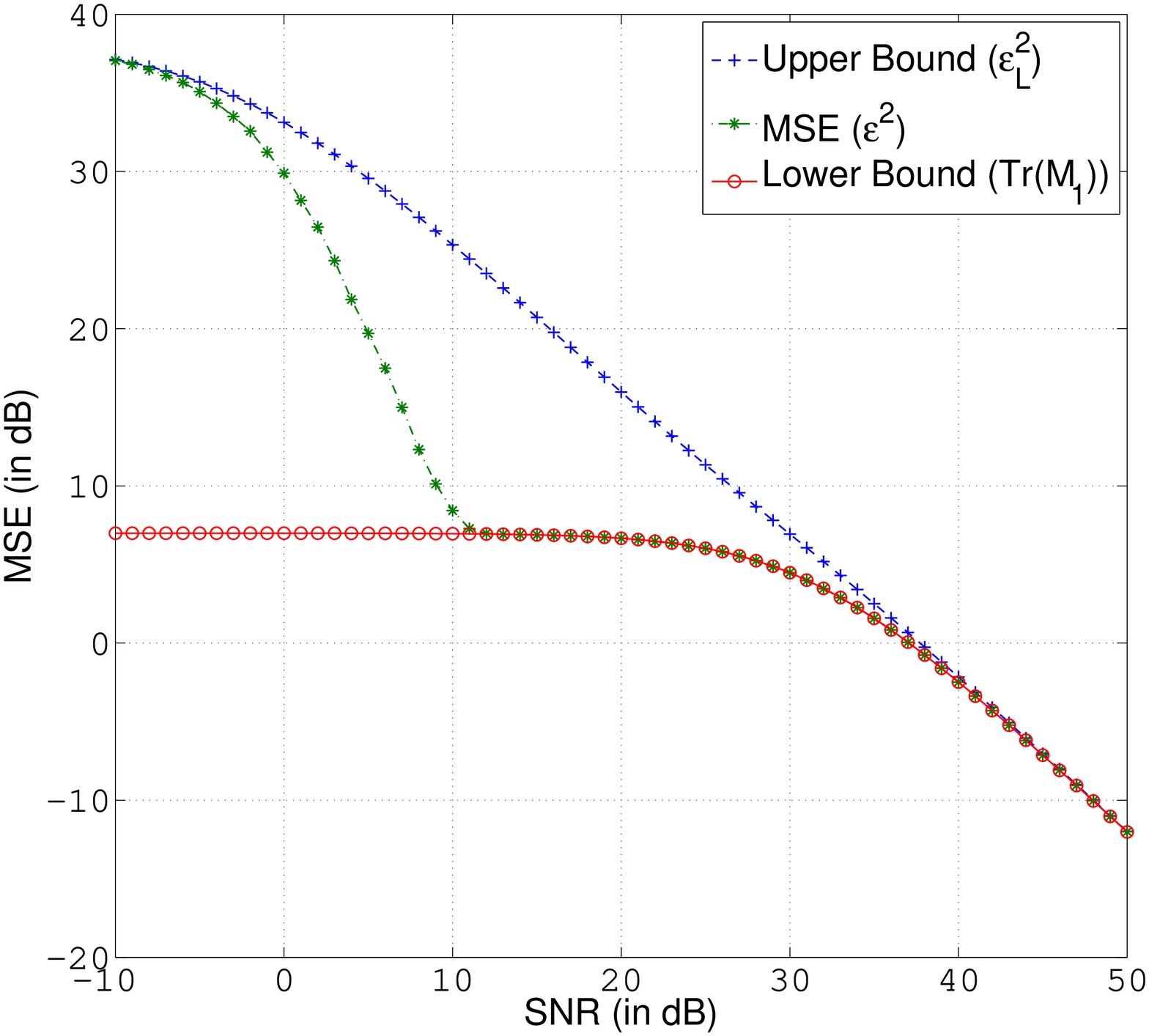}
\vspace{-0.3cm}
\caption{Estimate of the Bayesian MSE $\epsilon^2$, together with its upper and lower bounds. $\H=\I$ and uniform $p_k$.}
\label{fig:1}
\end{figure}

The estimated MSE ($\epsilon^{2}$) is obtained by averaging over 50000 independent $\y$'s for each SNR value. 
One observes that Figure (\ref{fig:1}) is in line with our findings in section \ref{Simple Example: High and Low SNR Cases}: At low SNR, the MSE approaches its upper bound, and at high SNR it approaches the lower, both of which coincide with the MSE of the LMMSE estimator. Note however, that at intermediate SNR values, the optimal MMSE estimator outperforms the LMMSE estimator (the upper bound) quite substantially - and most impressively, for finite and quite modest SNRs (approximately 10 dB and larger), the MMSE estimator performs as if it was helped by a genie. 

Without showing further plots, we remark that in the case when the component means of $\x$ have less variance (are less scattered) than in our example, then $\x$ is in principle more 'Gaussian', and the MSE will be closer to the upper bound for all SNR values. Similarly, when the component means have larger variance (are more scattered) than in our example, then $\x$ becomes more distinctly GM distributed, and the MSE starts to drop from the upper bound at even lower SNR values.

For the interested reader, the MATLAB code which produced the plot in Figure \ref{fig:1}, can be downloaded from: http://sites.google.com/site/saikatchatt/softwares.

\section{Conclusion}
\label{sec:Conclusions}

We have provided the necessary theoretical foundation and derived the MMSE estimator from the Bayesian linear model, when both the noise and the signal have GM distributions. Furthermore, we have shown that the MSE of this estimator cannot be determined in closed form, but that it can be upper bounded by an LMMSE estimator, and lower bounded by a genie aided MMSE estimator. Monte Carlo simulations confirm the bounds, and show that the difference in performance between the optimal MMSE estimator and the LMMSE estimator may be substantial.

\section{Acknowledgments}
John T. Fl{\aa}m's work is supported by the Research Council of Norway under the
NORDITE/VERDIKT program, Project CROPS2 (Grant 181530/S10). Saikat Chatterjee is funded in part by VINNOVA, the Strategic Research Area project RICSNET, and EU FP7 FeedNetBack. Kimmo Kansanen has received funding from the European Communitys Seventh Framework Program (FP7/2007-2013) under grant agreement nr 216076 (FP7-SENDORA).

\section{Appendix: Transforms of GM distributed random vectors}
\label{appendix}
In the literature, mixture distributions are often characterized by a convex combination of probability density functions, see e.g \cite{mclachlan200001}, \cite{citeulike:5193164}. Since not all random variables can be characterized by a probability density function (not all probability measures have a density \cite{ito}), the results presented in this appendix do not rely on probability densities. The results are obtained using distributions (alias measures) and characteristic functions, both of which always exist.

Propositions \ref{Expectation of a mixture.} and \ref{Covariance matrix of a mixture.} can be found in similar form in \cite{888716}. The other propositions, may well exist in the literature, but we have not been able to find them. Since much of our work depends on these propositions, it is natural to include them.

\begin{definition}\label{Finite Mixture distribution}\textbf{Finite Mixture distribution.}\\
Let $\mathcal{K}$ be a finite index set. For each $k\in\mathcal{K}$, let $p_k$ be the probability of drawing index $k$ from $\mathcal{K}$, and let $P_k$ be a probability distribution (or measure) on a Euclidean
(finite-dimensional vector) space $\mathbb{X}$.
Then, the convex combination
\begin{equation}
P=\sum_{k\in\mathcal{K}}p_k P_{k}\label{mixture}%
\end{equation}
also defines a probability distribution on $\mathbb{X}$. We call (\ref{mixture}) as a finite mixture distribution on $\mathbb{X}$.  
\end{definition}

\begin{definition}\label{Finite Gaussian Mixture distribution}\textbf{Gaussian Mixture (GM) distribution.}\\
When all component measures $\left\{ P_k \right\}$ are Gaussian, we call (\ref{mixture}) as a (finite) Gaussian mixture (GM) distribution.
We indicate that a random variable $\x$ is GM distributed by writing 
\begin{align}
\x\sim \sum_{k}p_k \mathcal{N}(\u^{(k)}_{\x},\C^{(k)}_{\x\x}),\nonumber
\end{align}
where it is implicit that $k$ belongs to a finite index set.
\end{definition}

In the following, $\x$ denotes a vector in the sample space $\mathbb{X}$. We define all vectors as column vectors, and assume all samples spaces to be continuous. 

\begin{proposition}\label{Expectation of a mixture.}\textbf{Mean of a mixture.}\\ Suppose $P_k$ has finite mean
\begin{align}
\u^{(k)}_{\x} =\int_{\x\in\mathbb{X}} \x P_{k}(d\x).\nonumber
\end{align}
Then the mixture distribution of (\ref{mixture}) has mean
\begin{align}
\u_{\x}=\sum_{k\in\mathcal{K}}p_k \u^{(k)}_{\x}.\nonumber
\end{align}
\end{proposition}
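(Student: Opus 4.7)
The plan is to compute the mean of $P$ directly from its definition and exploit linearity of the integral over a finite convex combination of measures. First, I would write the mean of the mixture as
$\u_{\x} = \int_{\x \in \mathbb{X}} \x \, P(d\x)$,
and substitute the defining relation $P(d\x) = \sum_{k \in \mathcal{K}} p_k P_{k}(d\x)$ from Definition~\ref{Finite Mixture distribution}.

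Because $\mathcal{K}$ is finite, the sum and the integral commute with no need for a dominated- or monotone-convergence argument; that is, $\int \x \, P(d\x) = \sum_{k} p_k \int \x \, P_{k}(d\x)$. Invoking the hypothesis that each component mean is finite, the inner integrals evaluate to $\u^{(k)}_{\x}$, delivering the claim $\u_{\x} = \sum_{k \in \mathcal{K}} p_k \u^{(k)}_{\x}$.

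The only mild point worth checking is that $P$ itself has a finite mean, so the left-hand side is well-defined. This is immediate from the triangle inequality and the same finite interchange applied to $\|\x\|$: one obtains $\int \|\x\| \, P(d\x) \leq \sum_{k} p_k \int \|\x\| \, P_{k}(d\x) < \infty$, since each component has finite mean by hypothesis. There is no genuine obstacle here — the entire argument amounts to linearity of integration against a finite convex combination of measures, which is why the writeup should be short.
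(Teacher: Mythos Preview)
Your argument is correct and mirrors the paper's proof exactly: write $\u_{\x}=\int \x\,P(d\x)$, substitute $P=\sum_k p_k P_k$, swap the finite sum with the integral, and identify each term as $\u^{(k)}_{\x}$. Your extra remark on finiteness of $\int \|\x\|\,P(d\x)$ is a small bonus the paper omits, but otherwise the approaches coincide.
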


\begin{proof}
\begin{eqnarray}
\begin{array}{rcl}
\u_{\x} & = & \displaystyle\int_{\x\in\mathbb{X}} \x \sum_{k\in\mathcal{K}}p_k P_{k}(d\x) \\
& = & \displaystyle\sum_{k\in\mathcal{K}}p_k \int_{\x\in\mathbb{X}}\x P_{k}(d\x) \\
& = & \displaystyle\sum^{K}_{k=1}p_k \u^{(k)}_{\x} \nonumber .
\end{array}
\end{eqnarray}
\end{proof}

\begin{proposition}\label{Covariance matrix of a mixture.}\textbf{Covariance of a mixture.}\\
Suppose $P_k$ has the finite mean $\u^{(k)}_{\x}$, and all elements of the covariance matrix
\begin{align}
\C^{(k)}_{\x\x}:=\int_{\x\in\mathbb{X}}(\x-\u^{(k)}_{\x})(\x-\u^{(k)}_{\x})^{T}P_{k}(d\x)\nonumber
\end{align}
have finite magnitudes.
Then, the covariance of the mixture distribution (\ref{mixture}) is
\begin{align}
\C_{\x\x} =\sum_{k\in\mathcal{K}}p_k \left(\C^{(k)}_{\x\x}+\u^{(k)}_{\x}{\u^{(k)}_{\x}}^{T}\right)-\u_{\x}{\u_{\x}}^{T}.
\nonumber
\end{align}
\end{proposition}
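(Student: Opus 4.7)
The plan is to reduce the statement to the familiar identity $\C_{\x\x} = E[\x\x^T] - \u_{\x}\u_{\x}^T$ and then exploit the linearity of the integral with respect to the mixture measure $P = \sum_{k} p_k P_k$. The finiteness hypothesis on $\C^{(k)}_{\x\x}$ and $\u^{(k)}_{\x}$ is exactly what is needed to justify the integrability of $\x\x^T$ against each $P_k$, so all the integrals below are well defined.

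First, I would start from the definition
\begin{equation}
\C_{\x\x} = \int_{\x \in \mathbb{X}} (\x - \u_{\x})(\x - \u_{\x})^T P(d\x),\nonumber
\end{equation}
expand the outer product, and use Proposition \ref{Expectation of a mixture.} to recognize $\int \x\, P(d\x) = \u_{\x}$. Standard manipulations then give the well-known identity $\C_{\x\x} = \int \x\x^T P(d\x) - \u_{\x}\u_{\x}^T$. Substituting the mixture $P = \sum_{k} p_k P_k$ and exchanging the finite sum with the integral yields
\begin{equation}
\C_{\x\x} = \sum_{k \in \mathcal{K}} p_k \int_{\x \in \mathbb{X}} \x\x^T P_k(d\x) - \u_{\x}\u_{\x}^T.\nonumber
\end{equation}

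Next, for each component $k$ I would derive the component-wise second-moment identity by expanding the defining integral of $\C^{(k)}_{\x\x}$:
\begin{equation}
\C^{(k)}_{\x\x} = \int \x\x^T P_k(d\x) - \u^{(k)}_{\x}{\u^{(k)}_{\x}}^T - \u^{(k)}_{\x}{\u^{(k)}_{\x}}^T + \u^{(k)}_{\x}{\u^{(k)}_{\x}}^T,\nonumber
\end{equation}
which collapses to $\int \x\x^T P_k(d\x) = \C^{(k)}_{\x\x} + \u^{(k)}_{\x}{\u^{(k)}_{\x}}^T$. Inserting this into the previous display gives exactly the claimed expression.

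I do not anticipate a genuine obstacle: the argument is a routine manipulation of first and second moments combined with linearity of integration, and the hypotheses on finite component means and covariances ensure that every swap of sum and integral is legitimate (the sum is finite, so no appeal to a dominated convergence theorem is needed). The only point that deserves a line of care is making sure the reader sees that $\u_{\x}$ in the final expression is itself the mixture mean from Proposition \ref{Expectation of a mixture.}, so the formula is genuinely self-contained and does not depend on knowing $\u_{\x}$ separately from the component parameters.
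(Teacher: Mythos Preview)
Your proposal is correct and follows essentially the same route as the paper: the paper also invokes the identity $\C_{\x\x}=E(\x\x^{T})-\u_{\x}\u_{\x}^{T}$, substitutes the mixture measure, swaps the finite sum with the integral, and then replaces each $\int \x\x^{T}P_k(d\x)$ by $\C^{(k)}_{\x\x}+\u^{(k)}_{\x}{\u^{(k)}_{\x}}^{T}$. Your write-up simply makes the justification of each step a bit more explicit.
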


\begin{proof}
We use the fact that $\C_{\x\x} = E(\mathbf{xx}^{T})-E(\x)E(\x)^{T}$ always holds. Thus
\begin{eqnarray}
\begin{array}{rcl}
\C_{\x\x} & = & \displaystyle\int_{\x\in\mathbb{X}} \mathbf{xx}^{T}\sum_{k\in\mathcal{K}}p_k P_{k}(d\x)-\u_{\x}\u_{\x}^{T}  \\
& = & \displaystyle\sum_{k\in\mathcal{K}}p_k \int_{\x\in\mathbb{X}} \mathbf{xx}^{T}P_{k}(d\x)-\u_{\x}{\u_{\x}}^{T}  \\
& = & \displaystyle\sum_{k\in\mathcal{K}}p_k \left(\C^{(k)}_{\x\x} + \u^{(k)}_{\x}{\u^{(k)}_{\x}}^{T}\right) - \u_{\x}{\u_{\x}}^{T}. \nonumber
\end{array}
\end{eqnarray}
\end{proof}

\begin{proposition}\label{Characteristic function of GM distributed RV.}\textbf{Characteristic function of a GM distributed random vector.}\\
Let $\x\sim \sum_{k}p_k \mathcal{N}(\u^{(k)}_{\x},\C^{(k)}_{\x\x})$. Then the characteristic function of $\x$ is (see e.g. \cite{GVK025286692})
\begin{align}
\phi(\t)=\sum_{k}p_k e^{i\t^{T}\u^{(k)}_{\x}-\frac{1}{2}\t^{T}\C^{(k)}_{\x\x}\t}.\nonumber
\end{align}
for any real vector $\t$.
\end{proposition}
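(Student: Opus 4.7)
The plan is to invoke linearity of integration together with the known characteristic function of a single multivariate Gaussian. I would begin directly from the definition $\phi(\t) = E\bigl[\exp(i\t^{T}\x)\bigr]$ and rewrite this expectation as an integral against the mixture measure $P = \sum_{k} p_k P_{k}$ supplied by Definition 1, where each $P_k$ is the Gaussian measure $\mathcal{N}(\u^{(k)}_{\x},\C^{(k)}_{\x\x})$.

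Next, since $\mathcal{K}$ is finite and the integrand $\exp(i\t^{T}\x)$ has modulus bounded by $1$, the finite convex combination may be pulled through the integral without any Fubini-type justification, giving
\begin{align}
\phi(\t) = \sum_{k} p_k \int_{\x \in \mathbb{X}} e^{i\t^{T}\x}\, P_{k}(d\x). \nonumber
\end{align}
Each remaining integral is, by construction, the characteristic function of a multivariate Gaussian with mean $\u^{(k)}_{\x}$ and covariance $\C^{(k)}_{\x\x}$, and the standard closed-form expression (cited in the statement itself) yields $\int e^{i\t^{T}\x}\, P_{k}(d\x) = \exp\bigl(i\t^{T}\u^{(k)}_{\x} - \tfrac{1}{2}\t^{T}\C^{(k)}_{\x\x}\t\bigr)$. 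Substituting back produces the claimed formula.

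The conceptual content is simply that the characteristic function of a mixture equals the same convex combination of the component characteristic functions; there is no real obstacle beyond this observation. I would highlight that the argument never appeals to the existence of a probability density, matching the measure-theoretic stance of the appendix, and that finiteness of $\mathcal{K}$ makes the interchange of sum and integral trivial, so the only non-trivial input is the standard Gaussian characteristic function quoted from the reference.
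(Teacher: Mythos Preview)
Your proposal is correct and follows essentially the same route as the paper: write $\phi(\t)=E(e^{i\t^{T}\x})=\int e^{i\t^{T}\x}\sum_{k}p_k P_{k}(d\x)$, interchange the finite sum and the integral, and then insert the standard Gaussian characteristic function for each component. If anything, you supply slightly more justification (boundedness of the integrand, finiteness of $\mathcal{K}$, no reliance on densities) than the paper's proof does.
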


\begin{proof}
For any real vector $\t$, the characteristic function for $\x\sim \mathcal{N}(\u_{\x},\C_{\x\x})$ is
\begin{align}
\phi(\t)=\int e^{i\t^{T}\x} P(d\x)=e^{i\t^{T}\u_{\x}-\frac{1}{2}\t^{T}\C_{\x\x}\t} \nonumber 
\end{align}
where $P=\mathcal{N}(\u_{\x},\C_{\x\x})$. Now, if $\x\sim \sum_{k}p_k \mathcal{N}(\u^{(k)}_{\x},\C^{(k)}_{\x\x})$, then the characteristic function is
\begin{eqnarray}
\begin{array}{rcl}
\phi(\t) & = & E\left(e^{i\t^{T}\x}\right) \\
& = & \displaystyle\int e^{i\t^{T}\x}\sum_{k}p_k P_{k}(d\x) \\
& = & \displaystyle\sum_{k}p_k\int e^{i\t^{T}\x} P_{k}(d\x) \\
& = & \displaystyle\sum_{k}p_k e^{i\t^{T}\u^{(k)}_{\x}-\frac{1}{2}\t^{T}\C^{(k)}_{\x\x}\t}. \nonumber
\end{array}
\end{eqnarray}
\end{proof}

\begin{proposition}\label{Joint distribution of independent GM distributed random vectors}
\textbf{Joint distribution of independent GM distributed random vectors}.\\ 
Let $\x\sim \sum_{k}p_k \mathcal{N}(\u^{(k)}_{\x},\C^{(k)}_{\x\x})$ and $\y\sim \sum_{r}q_r\mathcal{N}(\u^{(r)}_{\y},\C^{(r)}_{\y\y})$, where and $\x$ and $\y$ are mutually independent. Then $\x$ and $\y$ are jointly GM distributed as
\begin{align}
 \left [ \begin{array}{c} \x \\ \y \end{array} \right ] \sim \sum_{k,r}p_k q_r \mathcal{N}\left(\left [ \begin{array}{c} \u^{(k)}_{\x} \\ \u^{(r)}_{\y} \end{array} \right ],\left [ \begin{array}{cc} \C^{(k)}_{\x\x} & 0 \\ 0 & \C^{(r)}_{\y\y} \end{array} \right ]\right).\nonumber
 \end{align}
\end{proposition}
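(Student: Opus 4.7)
The plan is to prove this by matching characteristic functions, which is the natural route given that Proposition~\ref{Characteristic function of GM distributed RV.} has already been established and the appendix is explicitly written to avoid densities. Let $\t$ and $\s$ be real vectors of the appropriate dimensions, and let $\z = [\t^T \, \s^T]^T$ denote the concatenated test vector. The joint characteristic function of $[\x^T \, \y^T]^T$ is $\phi_{\x,\y}(\z) = E\{ e^{i\t^T\x + i\s^T\y} \}$, and by the mutual independence of $\x$ and $\y$ this factors as $\phi_\x(\t)\,\phi_\y(\s)$.

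Next I would substitute the GM expressions provided by Proposition~\ref{Characteristic function of GM distributed RV.} for each factor. This yields a product of two finite sums, which by distributivity equals a double sum indexed by $(k,r)$ with coefficient $p_k q_r$ and exponent
\begin{equation*}
i\t^T \u_\x^{(k)} - \tfrac{1}{2}\t^T \C_{\x\x}^{(k)}\t + i\s^T \u_\y^{(r)} - \tfrac{1}{2}\s^T \C_{\y\y}^{(r)}\s.
\end{equation*}
The bookkeeping step is to rewrite this exponent in block form as $i\z^T \m^{(k,r)} - \tfrac{1}{2}\z^T \Sigma^{(k,r)} \z$, where $\m^{(k,r)} = [\u_\x^{(k)T} \, \u_\y^{(r)T}]^T$ and $\Sigma^{(k,r)} = \mathrm{blkdiag}(\C_{\x\x}^{(k)}, \C_{\y\y}^{(r)})$; the off-diagonal zero blocks appear precisely because no cross terms $\t^T(\cdot)\s$ occur, which is the algebraic imprint of the independence assumption. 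Each resulting summand is exactly the characteristic function of $\mathcal{N}(\m^{(k,r)}, \Sigma^{(k,r)})$, so by Proposition~\ref{Characteristic function of GM distributed RV.} (applied in the reverse direction) the full expression is the characteristic function of the claimed GM distribution.

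Finally, I would invoke the uniqueness theorem for characteristic functions to conclude that the distribution of $[\x^T \, \y^T]^T$ is precisely the stated mixture. I do not anticipate a genuine obstacle here; the only thing to be careful about is that the coefficients $\{p_k q_r\}_{k,r}$ indeed form a valid mixture weight vector, i.e.\ they are nonnegative and sum to one, which is immediate from $\sum_k p_k = \sum_r q_r = 1$ and $p_k,q_r \ge 0$. The characteristic-function route is cleaner than a density-based calculation because it avoids any integrability or existence-of-density caveats, matching the stated philosophy of the appendix.
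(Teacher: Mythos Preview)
Your proposal is correct and follows essentially the same route as the paper: both compute the joint characteristic function by factoring via independence, substitute the GM characteristic-function formula from Proposition~\ref{Characteristic function of GM distributed RV.}, expand the product into a double sum, and regroup the exponent into block form. Your write-up is in fact a bit more explicit than the paper's, since you spell out the appeal to uniqueness of characteristic functions and verify that $\{p_k q_r\}$ is a legitimate set of mixture weights.
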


\begin{proof}
By Proposition \ref{Characteristic function of GM distributed RV.}, the characteristic functions of $\x$ and $\y$ are
\begin{align}
\phi_{\x}(\t)=\sum_{k}p_k e^{i\t^{T}\u^{(k)}_{\x}-\frac{1}{2}\t^{T}\C^{(k)}_{\x\x}\t} \nonumber
\end{align}
and
\begin{align}
\phi_{\y}(\s)=\sum_{r}q_r e^{i\s^{T}\u^{(r)}_{\y}-\frac{1}{2}\s^{T}\C^{(r)}_{\y\y}\s} \nonumber
\end{align}
respectively. Because of the independence, the characteristic function of the joint random vector $[\x^{T} \y^{T}]^{T}$ is 
\begin{eqnarray}
\begin{array}{l}
\phi_{\x,\y}\left( \left [ \begin{array}{c} \t \\ \s \end{array} \right ]\right)  \\
= \phi_{\x}(\t)\phi_{\y}(\s) \\
= \displaystyle\sum_{k,r}p_k q_r e^{i\left(\t^{T}\u^{(k)}_{\x}+\s^{T}\u^{(r)}_{\y}\right)-\frac{1}{2}\left(\t^{T}\C^{(k)}_{\x\x}\t+\s^{T}\C^{(r)}_{\y\y}\s\right)} \\
= \displaystyle\sum_{k,r}p_k q_r \exp\left(i\left[\t^{T} \s^{T}\right]\left [ \begin{array}{c} \u^{(k)}_{\x} \\ \u^{(r)}_{\y} \end{array} \right ] \right.  \\
\left. \hspace{3cm} -\frac{1}{2}\left[\t^{T} \s^{T}\right]\left [ \begin{array}{cc} \C^{(k)}_{\x\x} & 0 \\ 0 & \C^{(r)}_{\y\y} \end{array} \right ]\left [ \begin{array}{c} \t \\ \s \end{array} \right ]\right)\nonumber
\end{array}
\end{eqnarray}
for any real vector $[\t^{T} \s^{T}]^{T}$. 
\end{proof}

\begin{proposition}\label{Affine transform of a GM distributed random vector.}
\textbf{Affine transform of a GM distributed random vector.}\\ Let $\mathbf{y=Dx+a}$ where $\x\sim \sum_{k}p_k \mathcal{N}(\u^{(k)}_{\x},\C^{(k)}_{\x\x})$. Then 
\begin{align} 
\y\sim \sum_{k}p_k\mathcal{N}(\D\u^{(k)}_{\x}+\a,\D{\C^{(k)}_{\x\x}}\D^{T})\nonumber.
\end{align}
\end{proposition}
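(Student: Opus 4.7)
The plan is to mimic the style used for Propositions~\ref{Joint distribution of independent GM distributed random vectors} and work at the level of characteristic functions, since this is the tool the appendix has already built up and it avoids any measure-theoretic worries about the existence of densities (e.g., when $\D$ is not square or has non-trivial null space).

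First I would compute the characteristic function of $\y = \D\x + \a$ directly from its definition. For any real vector $\t$ of the appropriate dimension,
\begin{align}
\phi_{\y}(\t) = E\!\left(e^{i\t^{T}\y}\right) = E\!\left(e^{i\t^{T}(\D\x+\a)}\right) = e^{i\t^{T}\a}\, E\!\left(e^{i(\D^{T}\t)^{T}\x}\right) = e^{i\t^{T}\a}\,\phi_{\x}(\D^{T}\t).\nonumber
\end{align}
Next, I would substitute the GM characteristic function of $\x$ supplied by Proposition~\ref{Characteristic function of GM distributed RV.}, evaluated at $\D^{T}\t$, which gives
\begin{align}
\phi_{\y}(\t) = e^{i\t^{T}\a}\sum_{k} p_k\, e^{\,i(\D^{T}\t)^{T}\u^{(k)}_{\x} - \tfrac{1}{2}(\D^{T}\t)^{T}\C^{(k)}_{\x\x}(\D^{T}\t)}.\nonumber
\end{align}
Pulling $e^{i\t^{T}\a}$ inside the sum and using $(\D^{T}\t)^{T}\u^{(k)}_{\x} = \t^{T}\D\u^{(k)}_{\x}$ together with $(\D^{T}\t)^{T}\C^{(k)}_{\x\x}(\D^{T}\t) = \t^{T}\D\C^{(k)}_{\x\x}\D^{T}\t$ yields
\begin{align}
\phi_{\y}(\t) = \sum_{k} p_k\, e^{\,i\t^{T}\!\left(\D\u^{(k)}_{\x}+\a\right) - \tfrac{1}{2}\t^{T}\!\left(\D\C^{(k)}_{\x\x}\D^{T}\right)\t}.\nonumber
\end{align}

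Finally, I would recognise the right-hand side, again via Proposition~\ref{Characteristic function of GM distributed RV.}, as exactly the characteristic function of $\sum_{k} p_k\,\mathcal{N}(\D\u^{(k)}_{\x}+\a,\,\D\C^{(k)}_{\x\x}\D^{T})$. Invoking the uniqueness theorem for characteristic functions (a probability measure on Euclidean space is determined by its characteristic function) then forces $\y$ to have the claimed GM distribution, which completes the proof.

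The only mildly delicate point is that $\D\C^{(k)}_{\x\x}\D^{T}$ need not be strictly positive definite when $\D$ is rectangular or rank-deficient, so the component ``Gaussians'' may be degenerate; however, the characteristic-function characterisation still applies verbatim to such singular Gaussian measures, so no separate treatment is needed. I do not foresee any serious obstacle: the proof is essentially a one-line substitution of $\D^{T}\t$ into the formula of Proposition~\ref{Characteristic function of GM distributed RV.}, followed by an appeal to uniqueness.
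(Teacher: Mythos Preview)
Your proposal is correct and follows exactly the same route as the paper's proof: compute $\phi_{\y}(\t)=e^{i\t^{T}\a}\phi_{\x}(\D^{T}\t)$, plug in Proposition~\ref{Characteristic function of GM distributed RV.}, and simplify. The paper leaves the uniqueness-of-characteristic-functions step and the remark on possibly degenerate $\D\C^{(k)}_{\x\x}\D^{T}$ implicit, but otherwise your argument matches it line for line.
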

\begin{proof}
\begin{eqnarray}
\begin{array}{rcl}
\phi_{\y}(\t) & = & E \left( e^{i\t^{T} \left( \D\x + \a \right)} \right) \\
& = & e^{i\t^{T}\a}E\left(e^{i\left(\D^{T}\t\right)^{T}\x}\right) \\
& = & e^{i\t^{T}\a} \displaystyle\sum_{k} p_k e^{i \left( \D^{T}\t \right)^{T} \u^{(k)}_{\x} - \frac{1}{2} \left( \D^{T}\t \right)^{T} \C^{(k)}_{\x\x} \left( \D^{T}\t \right)} \\
& = & \displaystyle\sum_{k}p_k e^{i\t^{T}\left(\D\u^{(k)}_{\x}+\a\right)-\frac{1}{2}\t^{T}\D\C^{(k)}_{\x\x}\D^{T}\t}.  \nonumber
\end{array}
\end{eqnarray}
\end{proof}
 
\begin{proposition}\label{Marginal distribution of a GM distribution}
\textbf{Marginal distribution of a GM distribution}.\\ Let $\x\sim \sum_{k}p_k \mathcal{N}(\u^{(k)}_{\x},\C^{(k)}_{\x\x})$. Partition $\x$ into two sub vectors such that
\begin{align} 
&\x=\left [ \begin{array}{c} \x_1 \\ \x_2 \end{array} \right ], \text{ }\u^{(k)}_{\x}=\left [ \begin{array}{c} \u^{(k)}_{\x_1} \\ \u^{(k)}_{\x_2} \end{array} \right ]\text{ and }\nonumber\\
 &\C^{(k)}_{\x\x}=\left [ \begin{array}{cc} \C^{(k)}_{\x_{1}\x_{1}} & \C^{(k)}_{\x_{1}\x_{2}} \\ \C^{(k)}_{\x_{2}\x_{1}} & \C^{(k)}_{\x_{2}\x_{2}} \end{array} \right ]\nonumber. 
\end{align}
Then the marginal distribution for $\x_1$ is $\sum_{k}p_k \mathcal{N}(\u^{(k)}_{\x_1},\C^{(k)}_{\x_{1}\x_{1}})$.
\end{proposition}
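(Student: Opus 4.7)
My plan is to deduce this marginal as a direct corollary of Proposition \ref{Affine transform of a GM distributed random vector.}, which has already been proved via characteristic functions. Let $n_1$ be the dimension of $\x_1$ and $n_2$ the dimension of $\x_2$. I would introduce the selection matrix $\D = [\I_{n_1} \,\, \mathbf{0}]$, where $\I_{n_1}$ is the $n_1 \times n_1$ identity and $\mathbf{0}$ is the $n_1 \times n_2$ zero matrix, and set $\a = \mathbf{0}$. With this choice, $\x_1 = \D \x$ is an affine (indeed, linear) transform of the GM distributed vector $\x$.

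The core of the argument is then a pair of elementary block-matrix computations. First, $\D \u^{(k)}_{\x}$ picks out exactly the top block of $\u^{(k)}_{\x}$, giving $\u^{(k)}_{\x_1}$. Second, $\D \C^{(k)}_{\x\x} \D^{T}$ extracts the top-left block of $\C^{(k)}_{\x\x}$, giving $\C^{(k)}_{\x_1 \x_1}$. Applying Proposition \ref{Affine transform of a GM distributed random vector.} term by term then yields
\begin{align}
\x_1 \sim \sum_{k} p_k \mathcal{N}\bigl(\u^{(k)}_{\x_1},\, \C^{(k)}_{\x_1 \x_1}\bigr), \nonumber
\end{align}
which is precisely the claimed marginal.

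There is essentially no obstacle here: the work was done in Proposition \ref{Affine transform of a GM distributed random vector.}, and all that remains is a bookkeeping check that selection by $\D = [\I \,\, \mathbf{0}]$ produces the right sub-mean and sub-covariance in every component. As a sanity check (and an alternative proof if one prefers to avoid invoking Proposition \ref{Affine transform of a GM distributed random vector.}), one may evaluate the joint characteristic function from Proposition \ref{Characteristic function of GM distributed RV.} at $\t = [\t_1^{T} \,\, \mathbf{0}^{T}]^{T}$; the resulting expression is manifestly the characteristic function of $\sum_{k} p_k \mathcal{N}(\u^{(k)}_{\x_1}, \C^{(k)}_{\x_1 \x_1})$, and uniqueness of the characteristic function then identifies this as the marginal law of $\x_1$.
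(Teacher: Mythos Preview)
Your proposal is correct and takes essentially the same approach as the paper: both reduce the marginalization to Proposition \ref{Affine transform of a GM distributed random vector.} by writing $\x_1$ as a linear map $\D\x$. In fact, your rectangular selection matrix $\D=[\I_{n_1}\ \mathbf{0}]$ is slightly cleaner than the paper's square $\D=\begin{bmatrix}\I_p & 0\\ 0 & 0\end{bmatrix}$, which strictly speaking yields $[\x_1^{T}\ \mathbf{0}^{T}]^{T}$ rather than $\x_1$ itself.
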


\begin{proof}
Without loss of generality, assume that $\x_1$ contains the $p$ first elements of $\x$. Let 
\begin{align}
\D=\left [ \begin{array}{cc} \I_p & 0 \\ 0 & 0 \end{array} \right ]\nonumber.
\end{align}
Then $\x_1=\D\x$, and by Proposition \ref{Affine transform of a GM distributed random vector.} the statement is proved.
\end{proof}

\bibliographystyle{IEEEbib}
\bibliography{strings}
  
\end{document}